\newcounter{spec}
{\end{list}}
\renewcommand{\P}{{\mathbf P}}
\newcommand{\Q}{{\mathbb Q}}
\newcommand{\R}{{\mathbb R}}
\newcommand{\Br}{{\operatorname{Br \  }}}
\newcommand{\Spec}{{\operatorname{Spec  }}}
\renewcommand{\lim}{\varprojlim}
\numberwithin{equation}{section}
\newfont{\gothic}{eufb10}
\newtheorem{theo}{Th\'{e}or\`{e}me}[section]
\newtheorem{prop}[theo]{Proposition}
\newtheorem{lem}[theo]{Lemme}
\newtheorem{cor}[theo]{Corollaire}
\theoremstyle{definition}
\newtheorem{defi}[theo]{D\'efinition}
\theoremstyle{remark}
\newtheorem{rema}[theo]{Remarque}
\newtheorem{remas}[theo]{Remarques}
\newcommand{\bthe}{\begin{theo}}
\newcommand{\ble}{\begin{lem}}
\newcommand{\bpr}{\begin{prop}}
\newcommand{\bco}{\begin{cor}}
\newcommand{\bde}{\begin{defi}}
\newcommand{\ethe}{\end{theo}}
\newcommand{\ele}{\end{lem}}
\newcommand{\epr}{\end{prop}}
\newcommand{\eco}{\end{cor}}
\newcommand{\ede}{\end{defi}}
\def\A{{\mathbb A}}
\def\Br{{\rm Br}}
\DeclareFontFamily{U}{wncy}{}
\DeclareFontShape{U}{wncy}{m}{n}{%
<5>wncyr5%
<6>wncyr6%
<7>wncyr7%
<8>wncyr8%
<9>wncyr9%
<10>wncyr10%
<11>wncyr10%
<12>wncyr6%
<14>wncyr7%
<17>wncyr8%
<20>wncyr10%
<25>wncyr10}{}
\DeclareMathAlphabet{\cyr}{U}{wncy}{m}{n}
\begin{document}

  \title[Saut du rang] {Point g\'en\'erique et saut du rang du groupe de Mordell-Weil}

\author{J.-L. Colliot-Th\'el\`ene}
\address{Universit\'e Paris Sud, Universit\'e Paris-Saclay\\Math\'ematiques, B\^atiment 307\\91405 Orsay Cedex\\France}
\email{jlct@math.u-psud.fr}

 \date{soumis  le 14 ao\^{u}t 2019; version l\'eg\`erement  r\'evis\'ee soumise le 26 f\'evrier 2020}

\maketitle

{\it \`A la m\'emoire d'Andr\'e N\'eron}

\bigskip

\begin{abstract}
Soient $k$ un corps de nombres et $U$ une $k$-vari\'et\'e lisse int\`egre.
Soit $X \to U$ une famille de vari\'et\'es ab\'eliennes. On \'etablit les \'enonc\'es suivants.
Si la $k$-vari\'et\'e $X$ est domin\'ee par une $k$-vari\'et\'e
qui satisfait l'approximation faible faible, par exemple si $X$ est $k$-unirationnelle,
alors l'ensemble $\mathcal{R}$  des $k$-points
de $U$ dont la fibre a un rang de Mordell-Weil strictement plus grand
que celui de la fibre g\'en\'erique 
est dense dans $U$ pour la topologie de Zariski.
Si $X$ est $k$-rationnelle, alors $\mathcal{R}$ n'est pas mince dans $U$.
Ceci g\'en\'eralise des r\'esultats de Billard et de Salgado.
L'id\'ee principale de la d\'emonstration, id\'ee qui remonte \`a la th\`ese de N\'eron  \cite{N}, est d'utiliser le point g\'en\'erique de la fibre g\'en\'erique
de la famille,
et d'appliquer ensuite le th\'eor\`eme de sp\'ecialisation de N\'eron.
Dans l'appendice, on voit comment la m\'ethode du point g\'en\'erique
donne des  r\'esultats de  Billing, de N\'eron, et de Holmes et Pannekoek.
 \end{abstract}
 
 \begin{altabstract}
 Let $k$ be a number field and $U$ a smooth integral $k$-variety.
 Let $X \to U$ be an abelian scheme. 
We consider the set $\mathcal{R}$ of rational points $m \in U(k)$ such that the
 Mordell-Weil rank of the fibre $U_{m}$ is strictly bigger than the Mordell-Weil rank
 of the generic fibre. We prove the following results.
 If the $k$-variety $X$ is $k$-unirational, then $\mathcal{R}$ is 
 dense for the Zariski
 topology on $U$. If $X$ is $k$-rational, then  $\mathcal{R}$ is not thin in $U$.
 \end{altabstract}

 \section{Introduction}
 
Soient $k$ un corps de nombres, $U$ une $k$-vari\'et\'e lisse g\'eom\'etriquement int\`egre,
et $\pi : X \to U$ une famille de vari\'et\'es ab\'eliennes, c'est-\`a-dire un $U$-sch\'ema ab\'elien.
Soit $F=k(U)$ le corps des fonctions rationnelles de $U$, et soit $X_{\eta}/F$ la $F$-vari\'et\'e  ab\'elienne donn\'ee par la fibre g\'en\'erique de $\pi$.
Soit $r$ le rang g\'en\'erique, c'est-\`a-dire le rang du groupe $X_{\eta}(F)$, groupe de type fini d'apr\`es Mordell, Weil et N\'eron.

On renvoie au pagrapraphe 1 pour les rappels sur la d\'efinition d'un sous-ensemble mince
de l'ensemble $W(k)$ des points $k$-rationnels d'une $k$-vari\'et\'e $W$.

Un des th\'eor\`emes principaux de la th\`ese de N\'eron \cite{N} assure, sans hypoth\`ese sur $U$,  que l'ensemble des points $m \in U(k)$ dont la fibre $X_{m}$ a son groupe de Mordell-Weil $X_{m}(k)$ de rang strictement plus petit que $r$ est un ensemble mince dans $U$
(voir le th\'eor\`eme \ref{neronserre} ci-dessous).
L'\'enonc\'e plus connu, et qui en est une cons\'equence, est que si la $k$-vari\'et\'e  $U$ est $k$-rationnelle, i.e. $k$-birationnelle \`a un espace projectif,
alors l'ensemble des points $m \in U(k)$ dont la fibre $X_{m}$ a rang de Mordell-Weil au moins \'egal \`a $r$ n'est pas mince dans $U$, et en particulier est   dense dans  $U$ pour la topologie de Zariski.

Dans le pr\'esent article, on s'int\'eresse \`a l'ensemble des points $m \in U(k)$ dont la fibre $X_{m}$ a rang de Mordell-Weil au moins \'egal \`a $r+1$.

Commen\c cons par rappeler que, pour une $k$-vari\'et\'e lisse 
g\'eom\'etriquement int\`egre $W$,
chacun des \'enonc\'es ci-dessous implique le suivant
(les d\'efinitions des diverses propri\'et\'es sont rappel\'ees au paragraphe \ref{rappels}).

$\bullet$  La $k$-vari\'et\'e $W$ est $k$-rationnelle.
 
$\bullet$  La $k$-vari\'et\'e $W$ satisfait l'approximation faible faible.

$\bullet$  L'ensemble $W(k)$ n'est pas mince dans $W$.

$\bullet$ L'ensemble $W(k)$ est   dense dans $W$ pour la topologie de Zariski.

\medskip

Le th\'eor\`eme principal de cet article est le th\'eor\`eme \ref{Hauptsatz}, dont l'\'enonc\'e 
couvre un grand nombre d'implications. Voici deux cas particuliers.

\begin{theo}
Soient $k$ un corps de nombres, $U$ une $k$-vari\'et\'e lisse g\'eom\'etriquement int\`egre,
et $\pi : X \to U$ une famille de vari\'et\'es ab\'eliennes de dimension relative au moins 1.
Soit $r$ le rang de Mordell-Weil de la fibre g\'en\'erique. 
Soit ${\mathcal R} \subset U(k)$ l'ensemble des points de $U(k)$ dont la fibre $X_{m}/k$ a
rang de Mordell-Weil au moins \'egal \`a $r+1$.

Chacun des \'enonc\'es ci-dessous implique le suivant.

(a) $X$ est $k$-rationnelle,
 
 (b) $X$ satisfait l'approximation faible faible,

(c) $X(k)$ n'est pas mince dans $X$,

(d) ${\mathcal R}$ n'est pas mince dans $U$.

(e) ${\mathcal R}$ est  dense dans $U$ pour la topologie de Zariski.
\end{theo}

Ce th\'eor\`eme s'applique par exemple
\`a la surface  cubique $X$ donn\'ee par  
l'\'equation homog\`ene $x^3+y^3+z^3+t^3=0$,
qui est $k$-rationnelle (Euler), 
qu'on fibre (rationnellement)  
en coupant par $z=\lambda t$, ce qui donne la famille
de courbes elliptiques
$$x^3+y^3+(\lambda^3+1)t^3=0$$
sur (un ouvert de) $\Spec(k[\lambda])$,
avec section nulle donn\'ee par $(x,y,t)=(1,-1,0)$.

\begin{theo}
Soient $k$ un corps de nombres, $U$ une $k$-vari\'et\'e lisse g\'eom\'etriquement int\`egre,
et $\pi : X \to U$ une famille de vari\'et\'es ab\'eliennes de dimension relative au moins 1. 
Soit $r$ le rang de Mordell-Weil de la fibre g\'en\'erique. 
Soit ${\mathcal R} \subset U(k)$ l'ensemble des points de $U(k)$ dont la fibre $X_{m}/k$ a
rang de Mordell-Weil au moins \'egal \`a $r+1$.

 1) Si la $k$-vari\'et\'e $X$ est $k$-unirationnelle, l'ensemble ${\mathcal R}$ est   dense dans $U$ pour la topologie de Zariski.

2) S'il existe une $k$-vari\'et\'e lisse   $W$  dont l'ensemble $W(k)$ des $k$-points n'est pas mince dans $W$, ce qui est le cas si $W$ est stablement $k$-rationnelle,  ou si elle satisfait l'approximation faible fabile, et s'il existe un $k$-morphisme
$W \to X$ dominant tel que la fibre g\'en\'erique
de l'application compos\'ee $W \to X \to U$ est g\'eom\'etriquement int\`egre, alors
${\mathcal R}$ n'est pas mince dans $U$.
\end{theo}

Ce th\'eor\`eme s'applique aux familles de courbes elliptiques d'\'equation
affine
$$ d(\lambda) y^2= p(x)$$
au-dessus de (un ouvert de)   $\Spec(k[\lambda])$, avec $d(\lambda)$ polyn\^{o}me s\'eparable de degr\'e 2 et $p(x)$ polyn\^{o}me
s\'eparable de degr\'e 3. Une telle surface est $k$-birationnelle \`a une surface de Ch\^{a}telet
\cite{CTSaSD}.
On prend pour $W \to X$ un torseur universel sur un mod\`ele projectif et lisse de la surface, de fibre triviale en un point $k$-rationnel. D'apr\`es \cite[Thm. 8.1]{CTSaSD}, un tel torseur est une vari\'et\'e stablement $k$-rationnelle.

\medskip

Pour $\pi: X\to U$ une famille de vari\'et\'es
ab\'eliennes de dimension relative au moins 1, on dispose de l'inclusion de corps de fonctions
$k(U) \subset k(X)$.
La d\'emonstration du th\'eor\`eme   \ref{Hauptsatz} et en particulier des deux  th\'eor\`emes ci-dessus repose sur un lemme  tr\`es simple,  le lemme \ref{generique},
dont l'esprit remonte \`a la th\`ese de N\'eron :  le rang du groupe de type fini $X_{\eta}(k(X))$
est strictement sup\'erieur au rang du groupe de Mordell-Weil-N\'eron $X_{\eta}(k(U))$ de la fibre g\'en\'erique de $\pi$, car le point g\'en\'erique de $X$
d\'efinit un point de $X_{\eta}(k(X))$ dont aucun multiple n'appartient \`a $X_{\eta}(k(U))$.

\medskip

La m\'ethode propos\'ee ici  permet de retrouver, et de g\'en\'eraliser,
divers r\'esultats sur le saut du rang :

$\bullet$  Certains r\'esultats de Billard \cite{B}, Salgado \cite{S}, Hindry-Salgado \cite{HS}, Loughran-Salgado \cite{LS} (voir les remarques \ref{bilsalhinlou}).

$\bullet$ Un r\'esultat de Rohrlich \cite{R} (voir la Proposition \ref{degre2}), 
 
$\bullet$ Un ancien r\'esultat de Billing et N\'eron,
et un r\'esultat de Holmes--Pannekoek \cite{HS}
(voir  le \S \ref{grandrang}).

 Il convient cependant de remarquer que les m\'ethodes de
 Salgado, Hindry-Salgado, Loughran-Salgado sur les surfaces elliptiques
 g\'eom\'etriquement rationnelles, qui utilisent des syst\`emes lin\'eaires de coniques
 sur ces surfaces, permettent sous certaines hypoth\`eses d'\'etablir pour ces surfaces
 des sauts du rang   d'au moins 2 par rapport \`a la fibre g\'en\'erique,
 ce que ne donne pas la pr\'esente m\'ethode.
 
 Je d\'edie ce texte \`a la m\'emoire d'Andr\'e N\'eron, qui fut mon directeur de th\`ese.

\section{Rappels}\label{rappels}

Soit $k$ un corps. Par d\'efinition, une $k$-vari\'et\'e est un $k$-sch\'ema s\'epar\'e de type fini. On note $X(k)$ l'ensemble des points $k$-rationnels de $X$.
Une $k$-vari\'et\'e lisse int\`egre qui poss\`ede un point $k$-rationnel est
g\'eom\'etriquement int\`egre.

Soit $k$ un corps de caract\'eristique z\'ero. Soit   $X$ une $k$-vari\'et\'e int\`egre
de dimension au moins 1.
Suivant Serre \cite{SMW,SGT}, un sous-ensemble $E \subset X(k)$  est dit mince s'il existe une $k$-vari\'et\'e  r\'eduite $Y$ et un
$k$-morphisme $f : Y \to X$  tels que :

(a) $E \subset f(Y(k))$.

(b) La fibre g\'en\'erique de $f$ est finie et n'admet pas de section rationnelle.

\medskip

Une union finie de sous-ensembles minces de $X(k)$ est mince. En particulier,
si $X(k)$ n'est pas mince dans $X$, alors le compl\'ementaire d'un sous-ensemble mince
de $X(k)$ n'est pas mince dans $X$. Si $E \subset X(k)$ n'est pas mince dans $X$, alors
$E$ est dense dans $X$ pour la topologie de Zariski. La r\'eciproque est clairement 
fausse.

\begin{lem}\label{mincegeomint} Soient $k$ un corps de caract\'eristique z\'ero et   $\pi : Z \to~X$ un $k$-morphisme
dominant de $k$-vari\'et\'es int\`egres. Supposons la fibre g\'en\'erique de $\pi$ g\'eom\'etriquement 
int\`egre. Soit $E\subset Z(k)$.  Si $\pi(E) \subset X(k)$ est mince dans $X$, alors $E$ est mince dans $Z$.  
\end{lem}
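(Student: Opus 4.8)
The plan is to pull back a witness of the thinness of $\pi(E)$ along $\pi$ by means of a fibre product. Since $\pi(E)$ is thin in $X$, there is a reduced $k$-variety $Y$ and a $k$-morphism $f : Y \to X$ whose generic fibre $Y_\eta$ (over the generic point $\eta$ of $X$) is finite and admits no rational section, with $\pi(E) \subset f(Y(k))$. I would set $Y' := (Y \times_X Z)_{\mathrm{red}}$, take $f' : Y' \to Z$ to be the second projection, and then verify that $f'$ witnesses the thinness of $E$. Passing to the reduction at the outset guarantees the reducedness demanded by the definition, and it affects neither the $k$-points nor the existence of a rational point on the generic fibre.

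First, the covering condition (a). Given $e \in E \subset Z(k)$, one has $\pi(e) \in \pi(E) \subset f(Y(k))$, so there is $y \in Y(k)$ with $f(y) = \pi(e)$; the pair $(y,e)$ defines a $k$-point of $Y \times_X Z$ mapping to $e$ under $f'$. Hence $E \subset f'(Y'(k))$. Second, the finiteness of the generic fibre of $f'$. Writing $\zeta$ for the generic point of $Z$ and using that $\pi$ is dominant, so $\pi(\zeta) = \eta$ and $k(X)$ embeds into $k(Z)$, the generic fibre of $f'$ is identified with $Y_\eta \times_{k(X)} k(Z)$, the base change of the finite $k(X)$-scheme $Y_\eta$; it is therefore finite over $k(Z)$.

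The main obstacle, and the only place where the hypothesis on $\pi$ enters, is to show that this generic fibre has no rational section, i.e. no $k(Z)$-point. Here I would invoke that the generic fibre $Z_\eta$ of $\pi$ is geometrically integral over $k(X)$ and has function field $k(Z)$; since we are in characteristic zero, geometric integrality (equivalently, geometric irreducibility) forces $k(X)$ to be algebraically closed in $k(Z)$. Now a $k(Z)$-point of $Y_\eta \times_{k(X)} k(Z)$ amounts to a $k(X)$-algebra homomorphism from the coordinate algebra of $Y_\eta$ into the field $k(Z)$; its image is a finite field extension $L$ of $k(X)$, corresponding to a point of $Y_\eta$ with residue field $L$. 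As $Y_\eta$ has no $k(X)$-point, such an $L$ is necessarily a nontrivial finite extension of $k(X)$, so an embedding $L \hookrightarrow k(Z)$ over $k(X)$ would produce an element of $k(Z)$ algebraic over $k(X)$ but not lying in $k(X)$ — contradicting algebraic closedness.

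Consequently no rational section exists, $f'$ satisfies condition (b), and together with (a) this shows that $E$ is thin in $Z$, as claimed. The only non-formal ingredient is the passage from geometric integrality of the generic fibre of $\pi$ to the algebraic closedness of $k(X)$ in $k(Z)$, which I would either cite from the standard theory of geometrically irreducible varieties or recall in a line; everything else is a routine manipulation of the fibre product and of finite algebras over a field.
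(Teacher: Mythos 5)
Your proof is correct and follows essentially the same route as the paper: both pull back the witness $f : Y \to X$ along $\pi$ via the fibre product $Y \times_X Z \to Z$, and both rule out a rational section by observing that geometric integrality of the generic fibre of $\pi$ makes $k(X)$ algebraically closed in $k(Z)$, so a $k(Z)$-point of $Y_\eta \otimes_{k(X)} k(Z)$ would force a $k(X)$-point of $Y_\eta$. The only (immaterial) difference is in securing reducedness: the paper shrinks $Z$ to a nonempty open so that $Y \times_X Z$ is reduced, whereas you pass to $(Y \times_X Z)_{\mathrm{red}}$, and you spell out the no-section argument that the paper leaves implicit.
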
 
\begin{proof}
Soit $f : Y \to X$ comme ci-dessus, avec $\pi(E) \subset f(Y(k))$. 
Pour \'etablir l'\'enonc\'e, on peut remplacer
$Z$ par un un ouvert non vide. On peut donc supposer que la $k$-vari\'et\'e  $Y \times_{X}Z $  est r\'eduite.
Sous les hypoth\`eses du lemme, la fibre g\'en\'erique de $g : W:=Y \times_{X}Z  \to Z$
est finie et n'admet pas de section rationnelle. On a $E \subset g(W(k))$. $\Box$.
\end{proof}

\medskip

Soit $k$ un corps de nombres. Soit $\Omega$ l'ensemble de ses places.
Soit $X$ une $k$-vari\'et\'e lisse   int\`egre poss\'edant un point $k$-rationnel.
On consid\`ere l'application diagonale
$$  \rho: X(k) \to \prod_{v\in \Omega} X(k_{v})$$
de $X(k)$ dans le produit topologique des ensembles $X(k_{v})$ munis de la topologie
induite par celle de $k_{v}$,
et, pour tout ensemble $S$  de places, l'application diagonale
$$\rho_{S} : X(k) \to \prod_{v\in S} X(k_{v}).$$
Si $X$ est projective, l'espace topologique $ \prod_{v\in \Omega} X(k_{v})$
co\"{\i}ncide avec l'espace des ad\`eles $X(\A_{k})$.

On va s'int\'eresser aux propri\'et\'es suivantes.

(AF) (Approximation faible) L'ensemble $X(k)$ est dense dans
 le produit topologique $\prod_{v \in \Omega} X(k_{v})$. Ceci \'equivaut \`a dire que, pour tout ensemble fini $S$ de places,
l'application $\rho_{S}$ a son image dense.  

(AFO) (Approximation faible ouverte)  L'adh\'erence de l'image de $\rho$ est ouverte. Ceci \'equivaut \`a dire
que, pour tout ensemble fini $S$ de places, l'adh\'erence de $\rho_{S}$
est ouverte, et qu'il existe un ensemble fini $S_{0}$ de places  tel que,
pour $S \subset \Omega$ fini avec $S\cap S_{0} = \emptyset$,
l'image de $\rho_{S}$ est dense.

(AFF) (Approximation faible faible)  Il existe un ensemble fini $S_{1}$ de places  tel que, pour
$S \subset \Omega$ fini avec $S\cap S_{1} = \emptyset$,
l'image de $\rho_{S}$ est dense.  

(BMF) (Brauer--Manin) La $k$-vari\'et\'e $X$ est projective et le quotient de groupes de Brauer $\Br(X)/\Br(k)$
est fini, ce qui assure que le ferm\'e de Brauer-Manin
$X(\A_{k})^{\Br(X)} \subset  X(\A_{k})$ est ouvert,  
et l'image de $\rho$ co\"{\i}ncide avec cet ouvert-ferm\'e.

Pour $X$ projective, lisse, g\'eom\'etriquement int\`egre sur un corps $k$ de  
 caract\'eristique z\'ero,
l'hypoth\`ese que le quotient $\Br(X)/\Br(k)$  est fini est satisfaite si
$X$ est g\'eom\'etriquement unirationnelle, et plus g\'en\'eralement si $X$
est g\'eom\'etriquement rationnellement connexe.  Sur $k$  un corps de nombres, 
elle est  satisfaite pour les surfaces $K3$ \cite{SZ} et les vari\'et\'es de Kummer \cite{HP}.
 
 On a les implications : (AF) implique (AFO) qui implique (AFF); 
 sous l'hypoth\`ese de (BMF), on a (AFO) et donc (AFF).
 
 \medskip
 
 Si $X$ est une $k$-vari\'et\'e lisse int\`egre  et $U \subset X$
un ouvert non vide poss\'edant un $k$-point,
chacune des  propri\'et\'es (AF), (AFO), (AFF)  vaut pour $X$ si et seulement si elle vaut pour $U$. Si $X$ est de plus projective et  $\Br(X)/\Br(k)$ est fini, 
la propri\'et\'e (BMF) pour $X$ implique les propri\'et\'es (AFO) et (AFF) pour $U$.

\medskip

 Serre  \cite[\S 3, Thm. 3.5.3]{SGT} a \'etabli :
\begin{theo}\label{serremincefin}
Soit $k$ un corps de nombres. Soit $X$ une $k$-vari\'et\'e  lisse et
g\'eom\'etriquement int\`egre. 
Soit $E \subset X(k)$ un ensemble mince.
Soit $S_{0}$ un ensemble fini de places. Il existe un ensemble fini    $S$ de places
 avec $S \cap S_{0} = \emptyset$ tel que l'image de $E$
dans $\prod_{v \in S} X(k_{v})$ n'est pas dense.
\end{theo}
On en d\'eduit l'\'enonc\'e suivant (cf. \cite[Cor. 3.5.4]{SGT}).
 \begin{theo}\label{mincefin}
Soit $k$ un corps de nombres. Soit $X$ une $k$-vari\'et\'e  lisse et
g\'eom\'etriquement 
int\`egre. Supposons 
$X(k)\neq \emptyset$.
Soit $E \subset X(k)$ un ensemble mince.

(i) Si $X$ satisfait (AF), alors l'image de $X(k) \setminus E$ dans
$\prod_{v \in \Omega} X(k_{v})$ est dense.

(ii) Si $X$ satisfait (AFO), alors 
 l'adh\'erence de l'image de $X(k) \setminus E$ dans
$\prod_{v \in \Omega} X(k_{v})$ co\"{\i}ncide avec celle de X(k), et est ouverte.

(iii) Si $X$ satisfait (AFF), alors il existe un ensemble fini $S_{1}$ de places
tel que
 $X(k) \setminus E$ est dense  
dans $\prod_{v \notin S_{1}} X(k_{v})$.

(iv) Si $X$ est projectif et satisfait (BMF), alors l'adh\'erence de $X(k) \setminus E$
dans $X(\A_{k})$ co\"{\i}ncide  avec celle de $X(k)$, \`a savoir $X(\A_{k})^{\Br(X)}$, et elle est ouverte
dans $X(\A_{k})$.

(v) Dans tous ces cas,  $X(k) \setminus E$ n'est pas mince, et  est dense dans $X$ pour la topologie de Zariski.
\end{theo}
\begin{proof}
(i) Soit $S_{0}$ un ensemble fini de places et $U_{0} \subset \prod_{v\in S_{0}} X(k_{v})$ un ouvert non vide. 
D'apr\`es le th\'eor\`eme    \ref{serremincefin}, il existe  
  un ensemble fini $S$ de places  avec $S \cap S_{0} = \emptyset$
et $\prod_{v\in S} U_{v} \subset \prod_{v\in S} X(k_{v})$ un ouvert qui ne rencontre pas
l'image diagonale de $E$. Alors 
$U_{0} \times \prod_{v\in S} U_{v}$ ne rencontre pas
l'image diagonale de $E$. 

Si $X$ satisfait (AF), pour tout ensemble $S_{0}$ et tout ouvert non vide
$U_{0} \subset \prod_{v\in S_{0}} X(k_{v})$
l'image diagonale de $X(k)$ 
est dense dans 
$$U_{0} \times \prod_{v\in S} U_{v} \subset \prod_{v\in  S \cup S_{0} }   X(k_{v}),$$ 
 et donc il en est de m\^eme
de $X(k) \setminus E$. Ceci implique que l'image diagonale de $X(k)\setminus E$
est dense dans $U_{0} \subset \prod_{v\in S_{0}} X(k_{v})$.

Si $X$ satisfait (AFO), il existe un ensemble fini $S_{0}$ de places et un ouvert non vide
$U_{0} \subset \prod_{v\in S_{0}} X(k_{v})$ tel que l'adh\'erence de 
 l'image diagonale de $X(k)$ dans $\prod_{v \in \Omega}X(k_{v})$ soit
  $U_{0} \times \prod_{v \notin S_{0}} X(k_{v})$. On conclut avec le m\^{e}me argument que ci-dessus.

Si $X$ satisfait (AFF),  il existe un ensemble fini $S_{1}$ de places tel que
l'image diagonale de $X(k)$ dans $\prod_{v \notin S_{1}} X(k_{v})$ est dense.
On raisonne comme en (i) en rempla\c cant
 $\prod_{v \in \Omega} X(k_{v})$ par $\prod_{v \notin S_{1}} X(k_{v})$.
 
  Sous l'hypoth\`ese (BM) pour $X$, on a  (AFO).
\end{proof}

Le cas des $k$-vari\'et\'es $k$-rationnelles, i.e. $k$-birationnelles \`a un espace projectif sur $k$,  est le cas classique.

Sur un corps de nombres $k$, la propri\'et\'e (BMF) pour une $k$-vari\'et\'e projective, lisse, g\'eom\'etriquement int\`egre
$X$ avec $X(k) \neq \emptyset$,
 et donc aussi les propri\'et\'es   (AFO), (AFF) pour les ouverts non vides de $X$,
sont conjectur\'ees pour toutes les vari\'et\'es $X$ g\'eom\'etriquement rationnellement connexes. L'\'enonc\'e (BMF), et donc aussi les \'enonc\'es (AFO) et (AFF),
ont \'et\'e \'etablis pour les compactifications lisses d'un certain nombre de
vari\'et\'es :

(1) Espaces homog\`enes de $k$-groupes lin\'eaires connexes lorsque les stabilisateurs
sont connexes (Sansuc, Borovoi).

(2) Surfaces de Ch\^{a}telet, donn\'ees par une \'equation affine $y^2-az^2=P(x)$
avec $a \in k^{\times}$ et $P(x)$ polyn\^{o}me s\'eparable de degr\'e 3 ou 4 \cite[Thm. 8.7]{CTSaSD}.

(3) Surfaces fibr\'ees en coniques sur ${\bf P}^1_{k}$
avec un $k$-point et avec au plus 4 fibres g\'eom\'etriques singuli\`eres \cite[Thm. 2]{CT}.

 (4)  Surfaces de del Pezzo de 4
avec un $k$-point (Salberger et Skorobogatov \cite[Thm. (0.1)]{SaSk}).

 (5) Surfaces fibr\'ees en coniques sur ${\bf P}^1_{k}$
 avec 5 fibres g\'eom\'etriques singuli\`eres. 
La d\'emonstration de \cite{SaSk} proc\`ede  
par r\'eduction \`a ce cas. On peut aussi d\'eduire le r\'esultat formellement de l'\'enonc\'e
\cite[Thm. (0.1)]{SaSk} en utilisant le th\'eor\`eme d'Iskovskikh \cite[Thm. 4]{Isk} que sur un corps
quelconque, une telle surface n'est pas $k$-minimale.

(6) Pour $k=\Q$ le corps des rationnels, les familles  de vari\'et\'es obtenues par Browning, Matthiesen et Skorobogatov \cite{BMS}  et Harpaz et Wittenberg \cite{HW},
 par exemple  les vari\'et\'es sur $\Q$ donn\'ees par une \'equation
 ${\rm Norm}_{K/{\Q}}(\Xi)=P(x)$
 avec $K/{\Q}$ une extension finie et
 $P(x) \in {\Q}[x]$ un polyn\^{o}me de degr\'e quelconque ayant toutes ses racines dans $\Q$.
 
On consultera le rapport r\'ecent \cite{W}.  L'\'enonc\'e (AFO), et donc aussi (AFF), a \'et\'e \'etabli pour les surfaces cubiques lisses par Swinnerton-Dyer
 \cite{SD}. Ceci implique que pour une telle surface $X$ avec $X(k)\neq \emptyset$, l'ensemble
 $X(k)$ n'est pas mince dans $X$. Ce dernier r\'esultat avait \'et\'e \'etabli par un argument de hauteurs
 par Manin \cite{Ma1} \cite[Chap. VI, Thm 46.1]{Ma2}.
  
  Soient $k$ un corps, $X$ une $k$-vari\'et\'e  lisse g\'eom\'etriquement int\`egre,
 et $Y \to X$ un sch\'ema ab\'elien.
Soit $K=k(X)$ le corps des fonctions de $X$. Soit $Y_{\eta}/K$ la fibre g\'en\'erique.
Pour tout $k$-point $m$ de $X$, de fibre $Y_{m}$, on dispose de l'application 
de sp\'ecialisation
$$ sp_{m} : Y_{\eta}(K) = Y(X) \to Y_{m}(k).$$

Dans \cite[\S 11]{SMW}, Serre \'etablit le th\'eor\`eme suivant, qui a pour
cons\'equence le th\'eor\`eme de sp\'ecialisation  
 de N\'eron \cite[Thm. 6]{N} :

\begin{theo}\label{neronserre}
Avec les notations ci-dessus, si $k$ est un corps de nombres,
 l'ensemble des points $m \in X(k)$ tels
 que l'application de sp\'ecialisation $sp_{m}$
 n'est pas injective est mince.
\end{theo}

Serre \'enonce le r\'esultat pour $X$ une vari\'et\'e $k$-rationnelle, 
mais cette hypoth\`ese ne joue aucun r\^{o}le dans la d\'emonstration,
seul importe le fait que le groupe des points rationnels de la fibre g\'en\'erique
sur le corps de type fini $K$ est un groupe de type fini (th\'eor\`eme de
Mordell-Weil-N\'eron \cite{N}).
La $k$-rationalit\'e de $X$ est appliqu\'ee ensuite pour voir que le compl\'ementaire
dans $X(k)$ d'un ensemble mince est dense.

 On peut  combiner les th\'eor\`emes \ref{mincefin} et \ref{neronserre}  pour obtenir des \'enonc\'es dont l'un des plus simples est le suivant : dans le th\'eor\`eme \ref{mincefin}, si $X(k) \neq \emptyset$ et $X$   satisfait l'approximation faible faible,
 par exemple si $X$ est $k$-birationnelle \`a un espace projectif,  alors  il existe un ensemble fini $S_{0}$ de places de $k$ tel que  pour tout ensemble fini $S$ de places avec $S \cap S_{0}= \emptyset$, les $k$-points de $X$ pour lesquels  $sp_{m}$ est injective sont denses dans $\prod_{v \in S} X(k_{v})$.

 \section{Saut du rang}
 
Nos r\'esultats reposent sur le lemme-cl\'e suivant.
 
\ble\label{generique} Soient $F$ un corps et $A/F$ une vari\'et\'e ab\'elienne.
Soit
  $x \in A$ un point sch\'ematique de $A$ qui n'est
  pas un point ferm\'e.
Soit $F(x)$ le corps r\'esiduel.
Alors le quotient $A(F(x))/A(F)$ n'est pas un groupe de torsion.
\ele
\begin{proof} Il suffit de traiter le cas
  o\`u $F$ est alg\'ebriquement clos, auquel cas le groupe
  $A(F)$ est divisible.
Soit  $z \in A(F(x))$ un point  qui n'est pas dans $A(F)$.
Supposons qu'il existe un entier $m \geq 1$ tel que
  $mz \in A(F)$. Alors il existe $z_{0} \in A(F)$
avec $mz=mz_{0}$, et donc $m(z-z_{0})=0$.
Comme  la $m$-torsion de $A(F)$ co\"{\i}ncide avec
celle de $A(F(x))$, ceci donne $z\in A(F)$, ce qui est une contradiction.
\end{proof}

 \begin{rema} Ce lemme s'applique en particulier au point g\'en\'erique
 de $A$ si ${\rm dim}(A)\geq 1$.
 Citons ici  l'\'enonc\'e de N\'eron   \cite[Prop. 7, p. 156]{N}) {\it verbatim}.
  {\it Soient $C$ une courbe de genre $g\geq 1$ d\'efinie sur un corps $k$
  et $A=(A_{1},\dots, A_{m})$ un syst\`eme de $m$ points g\'en\'eriques
  ind\'ependants sur $C$. Alors $C$ est de rang r\'eduit $\geq m$ dans 
  $k(A)=k(A_{1},\dots,A_{m})$.}
 \end{rema}

 Soient $k$ un corps de nombres, $U$ une $k$-vari\'et\'e  lisse g\'eom\'etriquement int\`egre 
 de dimension au moins $1$ poss\'edant un $k$-point,
 et $\pi : X \to U$ un sch\'ema ab\'elien de dimension relative au moins 1.
 Soit $W $ une $k$-vari\'et\'e lisse g\'eom\'etriquement int\`egre avec un $k$-point
 et $g: W \to X$ un $k$-morphisme.
 
 Soit $r$ le rang   de Mordell-Weil de la fibre g\'en\'erique $X_{\eta}$ sur le corps $K=k(U)$.
 Soit ${\mathcal R}  \subset U(k)$ l'ensemble des points  $m\in U(k)$ 
 tels que le rang $r_{m}$ de la fibre $X_{m}$ sur $k$
  soit au moins \'egal \`a $r+1$.
  
  \medskip
  
 Consid\'erons les hypoth\`eses   :

\medskip

 (1) La $k$-vari\'et\'e $W$ est $k$-rationnelle. 
 
 (2)  La $k$-vari\'et\'e $W$ satisfait l'approximation faible. 

(3)  La $k$-vari\'et\'e $W$ satisfait l'approximation faible ouverte.

(4)   La $k$-vari\'et\'e $W$ satisfait l'approximation faible faible.

(5)  L'ensemble $W(k)$ n'est pas mince dans la $k$-vari\'et\'e $W$.

 (6)  La $k$-vari\'et\'e $X$  satisfait l'approximation faible.   
 
   (7)   La $k$-vari\'et\'e $X$  satisfait l'approximation faible ouverte.  

 (8)  La $k$-vari\'et\'e $X$ satisfait l'approximation faible faible.  
 
 (9)   L'ensemble $X(k)$ n'est pas mince dans   la $k$-vari\'et\'e $X$.
 
\smallskip
 
 et les \'enonc\'es  :
 
 \medskip

(a) L'ensemble ${\mathcal R}$ est dense  dans $\prod_{v \in \Omega} U(k_{v})$,
c'est-\`a-dire que, pour tout ensemble fini $S$ de places, ${\mathcal R}$ est dense dans $\prod_{v\in S} U(k_{v})$.

(b)   L'adh\'erence de ${\mathcal R}$ dans $\prod_{v \in \Omega} U(k_{v})$ est ouverte,
c'est-\`a-dire qu'il existe un ensemble fini $S_{0}$ de places de $k$ tel que,
pour tout ensemble fini $S$ de places, l'adh\'erence de ${\mathcal R}$ dans
$\prod_{v\in S} U(k_{v})$ est ouverte, et qu'elle est \'egale \`a $\prod_{v\in S} U(k_{v})$
quand on a  $S \cap S_{0} = \emptyset$.

(c) Il existe un ensemble fini $S_{0}$ de places tel que
${\mathcal R}$  est dense dans $\prod_{v \notin S_{0}} U(k_{v})$.

(d) Pour  tout ensemble fini non vide  $S$ de places de $k$,
 l'adh\'erence de ${\mathcal R}$ dans $\prod_{v\in S} U(k_{v})$ contient un ouvert
non vide.

 (e)  Il existe un ensemble fini $S_{0}$ de places de $k$ tel que, pour
 tout ensemble fini  non vide $S$ de places de $k$ avec $S\cap S_{0} = \emptyset$,
 l'adh\'erence de ${\mathcal R}$ dans $\prod_{v\in S} U(k_{v})$ contient un ouvert
non vide.

(f) L'ensemble ${\mathcal R}$ n'est pas mince dans $U$.

(g) L'ensemble ${\mathcal R}\subset U(k)$ est dense dans $U$ pour la topologie de Zariski.

 \medskip
 
 D'apr\`es le th\'eor\`eme \ref{serremincefin}, on a  $(4) \Longrightarrow (5)$ et
 $(8) \Longrightarrow (9)$.  D'apr\`es 
 le th\'eor\`eme \ref{mincefin},
 chacune des hypoth\`eses (a), (b), (c) implique (f).
 Comme remarqu\'e au d\'ebut du \S \ref{rappels}, (f) implique (g).

 Les implications suivantes sont \'evidentes :  
  $(1) \Longrightarrow (2) \Longrightarrow (3) \Longrightarrow (4)$  et
 $(6)  \Longrightarrow (7)   \Longrightarrow (8) $.
 Il en est de m\^{e}me de  :
$(a) \Longrightarrow  (b) \Longrightarrow (d) \Longrightarrow (e)$
  et
 $(c)  \Longrightarrow (e).$

Si $X$ est $k$-unirationnelle, dans ce qui suit on peut
supposer (1) pour $W$.
  Si $X$ est $k$-rationnelle, on a (6).

   \medskip

 Voici le th\'eor\`eme principal de cette note.

 \begin{theo}\label{Hauptsatz}
 Soient $k$ un corps de nombres, $U$ une $k$-vari\'et\'e  lisse g\'eom\'etriquement int\`egre 
 de dimension au moins $1$ poss\'edant un $k$-point,
 et $\pi : X \to U$ un sch\'ema ab\'elien de dimension relative au moins 1.
 Soit $W $ une $k$-vari\'et\'e lisse g\'eom\'etriquement int\`egre avec un $k$-point
 et $g: W \to X$ un $k$-morphisme.
 Soit $g(W)_{ad} \subset X$ l'image sch\'ematique du morphisme $W \to X$.
 Supposons que le morphisme    $g(W)_{ad} \to U$ induit par $\pi : X \to U$
est dominant et que sa fibre g\'en\'erique est de dimension au moins 1. Alors, avec les notations ci-dessus :

 (i) Sous l'une des hypoth\`eses (1), (2), (3) sur $W$, on a (d).

(ii)  Sous l'une des hypoth\`eses (1), (2), (3), (4) sur $W$, on a (e).

 (iii) Sous l'hypoth\`ese (6) sur $X$, par exemple si $X$ est $k$-rationnelle, on a (a), (b) et (d).
 
(iv) Sous l'une des hypoth\`eses (6), (7) sur $X$, on a (b) et (d).

(v) Sous l'une des hypoth\`eses (6), (7), (8) sur $X$, on a (c) et (e).

(vi) Sous l'hypoth\`ese (5), on a (g).

(vii) Sous l'hypoth\`ese (5), si la fibre g\'en\'erique de $W \to U$  est g\'eom\'etriquement int\`egre,  on a (f).

(viii) Sous l'hypoth\`ese (9), on a (f) et (g).

  \end{theo}
 \begin{proof}

Pour \'etablir l'\'enonc\'e, on peut remplacer $W$ par un ouvert de Zariski et supposer que    $\psi:= \pi \circ g : W  \to U$
est lisse.
Pour les \'enonc\'es impliquant les hypoth\`eses (6), (7), (8) ou (9),
on prend $W=X$  et $g : X \to X$ l'identit\'e.

On consid\`ere le produit fibr\'e $Y=X\times_{U}W$ de $X$ et $W$
au-dessus de $U$.  
Via la projection $q: Y \to W$ sur le second facteur, c'est un $W$-sch\'ema ab\'elien
obtenu par changement de base \`a partir de $X \to U$.
 L'image du point g\'en\'erique de $W$ dans la fibre g\'en\'erique $X_{\eta}$ de
 $\pi$ par hypoth\`ese n'est pas un point ferm\'e de cette fibre g\'en\'erique.
 D'apr\`es le lemme \ref{generique}, ceci implique que la fibre g\'en\'erique de $q: Y \to W$ a 
 son rang de Mordell-Weil au moins \'egal \`a $r+1$. 
Pour $t\in W(k)$, la $k$-vari\'et\'e ab\'elienne $Y_{t}$ est isomorphe \`a la vari\'et\'e ab\'elienne $X_{\psi(t)}$.
Ces deux $k$-vari\'et\'es ab\'eliennes ont en particulier m\^{e}me rang de Mordell-Weil sur $k$.
Notons ${\mathcal S} \subset W(k)$ l'ensemble des $k$-points   $t \in W(k)$
dont la fibre  $Y_{t}$ a un rang de Mordell-Weil au moins \'egal \`a $r+1$.
Pour $t\in W(k)$, on a $t \in {\mathcal S} $ si et seulement si $\psi(t) \in {\mathcal R} \subset U(k)$.

D'apr\`es le th\'eor\`eme  \ref{neronserre}, le compl\'ementaire de $\mathcal{S}$ dans $W(k)$ est mince.
Sous l'hypoth\`ese (3) sur $W$,  le th\'eor\`eme \ref{mincefin} appliqu\'e \`a $W$
implique que l'adh\'erence de $\mathcal{S}$ dans $\prod_{v\in \Omega} W(k_{v})$ est ouverte.
Sous l'hypoth\`ese (4) sur $W$, le th\'eor\`eme  \ref{mincefin} appliqu\'e \`a $W$
assure au moins l'existence d'un ensemble fini $S_{0}$ de places tel
que  l'adh\'erence de $\mathcal{S}$ dans $\prod_{v \notin S_{0}} W(k_{v})$
est ouverte. Comme on a suppos\'e $W \to X$ lisse, pour tout ensemble fini
$T$ de places de $k$, 
les applications $$\prod_{v \in T} W(k_{v}) \to  \prod_{v \in T} U(k_{v})$$
sont ouvertes. Ainsi (3) implique (d),  et (4) implique (e).

Consid\'erons le cas particulier o\`u  le morphisme $g: W \to X$ est l'identit\'e de $X$
et $\psi=\pi : X \to U$. Dans ce cas, on dispose de la section nulle $\sigma : U \to X$
de $\psi= \pi: X \to U$.
On a $t \in {\mathcal S} \subset X(k) $ si et seulement si $\pi(t) \in {\mathcal R} \subset U(k)$.
Sous l'hypoth\`ese (6), resp. (7), resp. (8) sur $X$,
le th\'eor\`eme \ref{mincefin} (i), resp. (ii), resp. (iii)
 donne les r\'esultats de densit\'e sur $X$, qui induisent
 les r\'esultats voulus via les surjections  donn\'ees par $\pi$.

Montrons (vi), (vii) et (viii).
Sous l'hypoth\`ese (5) sur $W$ (ou (9), avec $W=X$),   l'ensemble $\mathcal{S} \subset W(k)$ n'est pas mince dans $W$, donc 
est dense dans $W$ pour la topologie 
de Zariski,  donc  $\mathcal{R} \subset U(k)$ qui contient son image par le morphisme dominant $\psi$
est  dense dans $U$  pour la topologie 
de Zariski.   
Si la fibre g\'en\'erique de $W \to U$ est g\'eom\'etriquement int\`egre,
le lemme \ref{mincegeomint} implique que l'image de  $\mathcal{S}$ dans $U(k)$ n'est pas mince dans $U$.
L'ensemble $\mathcal{R} \subset U(k)$ qui contient cette image n'est donc pas mince dans $U$.

L'\'enonc\'e (viii) est un cas particulier de (vi) et (vii).
  \end{proof} 
  
\begin{rema}  Supposons que $X_{\eta}$ est une vari\'et\'e ab\'elienne simple.
Soit $\rho \geq 1$ le rang du groupe ab\'elien de type fini ${\rm Hom}_{k(U)-{\rm gp}}(X_{\eta},X_{\eta})$.
On a alors $r=\rho.s$.
Sous les hypoth\`eses ci-dessus, on conclut \`a la densit\'e (en les divers sens)
des $k$-points dont la fibre est de rang au moins $\rho.(s+1)$.
Sous l'hypoth\`ese d'approximation faible faible pour $X$, ceci s'applique par exemple au cas d'une famille de courbes elliptiques  $X \to {\bf P}^1_{k}$
dont la fibre g\'en\'erique a multiplication complexe d\'efinie sur le corps $k({\bf P}^1)$.
Dans ce cas, $\rho=2$.
\end{rema}

 L'\'enonc\'e suivant est motiv\'e par des questions discut\'ees dans \cite{Maz} et \cite{R}
 (voir la proposition \ref{rohr} et la remarque \ref{rohrbis} ci-dessous).

\begin{theo}\label{pourrohrlich}
Soient $k$ un corps de nombres, $U$ une $k$-vari\'et\'e  lisse g\'eom\'etriquement int\`egre poss\'edant un $k$-point,
 et $\pi : X \to U$ un sch\'ema ab\'elien.
 Soit $r$ le rang   de Mordell-Weil de la fibre g\'en\'erique $X_{\eta}$ sur le corps $K=k(U)$.
 Soit ${\mathcal R}  \subset U(k)$ l'ensemble des points  $m\in U(k)$ 
 tels que le rang $r_{m}$ de la fibre $X_{m}(k)$ soit au moins \'egal \`a $r+1$.
   Soit $U \subset U_{c}$ une $k$-compactification lisse de $U$
 et
  $\pi_{c} : X_{c} \to U_{c}$ un $k$-morphisme de $k$-vari\'et\'es projectives lisses
  connexes \'etendant le morphisme $\pi = X \to U$ et \'equip\'e d'une section
  $\sigma_{c} : U_{c} \to X_{c}$ \'etendant la section nulle du sch\'ema ab\'elien $X \to U$.
 Supposons que la $k$-vari\'et\'e $X_{c}$ satisfait (BMF). Soit $S_{\infty}$
 l'ensemble des places archim\'ediennes de $k$.
L'ensemble ${\mathcal R}$ a une adh\'erence ouverte dans $\prod_{v \in \Omega} U_{c}(k_{v})$,
et son adh\'erence dans $\prod_{v \in S_{\infty}} U_{c}(k_{v})$
est une union finie de composantes connexes de ce produit.
\end{theo}
\begin{proof}
On garde les notations de la d\'emonstration pr\'ec\'edente, dans 
 le cas particulier o\`u  le morphisme $g: W \to X$ est l'identit\'e de $X$. On a les
 morphismes
$\psi=\pi : X \to U$ et $q: Y \to X$.
Rappelons que pour $t\in X(k)$, on 
  a  $t \in {\mathcal S} \subset X(k) $ si et seulement si $q(t) \in {\mathcal R} \subset U(k)$.
D'apr\`es le th\'eor\`eme \ref{mincefin} (iv),
 l'adh\'erence de ${\mathcal S} \subset X(k)$ dans $X_{c}(\A_{k})$
 co\"{\i}ncide avec l'ouvert $X_{c}(\A_{k})^{{\rm Br}(X_{c})}$.

   Comme l'\'evaluation d'un \'el\'ement du groupe de Brauer de $X_{c}$ 
   est localement constante sur chaque $X_{c}(k_{v})$ et donc constante sur chaque composante connexe de $X_{c}(k_{v})$ pour $v$ place archim\'edienne, ceci implique 
   que l'adh\'erence de ${\mathcal S}$ dans le produit
   $\prod_{v \in S_{\infty}} X_{c}(k_{v})$ est une union finie de composantes connexes.
   
   L'image par $\sigma_{c} = U_{c} \to X_{c}$ d'une composante connexe $V$
   de 
   $\prod_{v \in S_{\infty}} U_{c}(k_{v})$ est  un ensemble connexe et contient
   $\sigma_{c}(U_{c}(k))$, et en particulier au moins l'image d'un $k$-point de ${\mathcal R} \subset U(k)$.
   Cette image est donc contenue dans une composante connexe $Z$
   de  $\prod_{v \in S_{\infty}} X_{c}(k_{v})$ qui contient un point de ${\mathcal S} \subset X(k)$.
   Tout point de cette composante connexe $Z$, et en particulier tout point de $\sigma_{c}
  (\prod_{v \in S_{\infty}} U_{c}(k_{v}))$
    est donc dans l'adh\'erence de ${\mathcal S}$.
   L'adh\'erence de ${\mathcal R} =\pi_{c}({\mathcal S})$ est donc dense 
   dans $V$.
   \end{proof}

\begin{remas}\label{bilsalhinlou}
Soient $U$ un ouvert de la droite projective sur un corps de nombres $k$
 et   $\pi: X \to U$  une famille de courbes elliptiques, avec $X(k) \neq \emptyset$.

(i)  Pour $k=\Q$ le corps des rationnels, supposant la surface  $X$   $\Q$-rationnelle et la famille  non isotriviale,  H. Billard ([B], Th\'eor\`eme 4.1)
par un argument   impliquant  une comparaison de hauteurs globales sur $X$ avec les hauteurs
de Tate sur les fibres de $X \to U$
avait  \'etabli que l'ensemble des $\Q$-points $m$ avec $r_{m} \geq r+1$
 est infini.

(ii)  C. Salgado \cite[Cor. 1.2]{S} a \'etendu ce r\'esultat sur tout corps 
de nombres $k$ sous la simple 
 hypoth\`ese que la surface $X$ est $k$-unirationnelle. C'est une cons\'equence
 de \cite[Thm. 1.1]{S}, selon lequel, sous cette hypoth\`ese,  on peut 
 apr\`es un changement de base quasi-fini convenable $V \to U$,
 avec $V$ ouvert de $\P^1_{k}$, faire grandir d'au moins une unit\'e le rang de Mordell-Weil de la fibre g\'en\'erique.
  Le th\'eor\`eme  \ref{Hauptsatz} 
 ci-dessus g\'en\'eralise  \cite[Cor. 1.2]{S}. La d\'emonstration du th\'eor\`eme 
 \ref{Hauptsatz}  et un argument de sp\'ecialisation \`a la N\'eron doivent permettre 
d'\'etendre \cite[Thm. 1.1]{S} au cas d'une famille de vari\'et\'es ab\'eliennes
au-dessus d'un ouvert de $\P^1_{k}$.
  
(iii)  Lorsque la surface $X$ est $k$-rationnelle, sous certaines hypoth\`eses suppl\'ementaires sur $X$, C. Salgado  \cite{S} montre que l'ensemble des $k$-points $m \in U(k)$
 avec $r_{m} \geq r+2$ est infini. Un \'enonc\'e plus g\'en\'eral est obtenu par Loughran et Salgado \cite{LS} : il porte sur les surfaces elliptiques g\'eom\'etriquement rationnelles.

 (iv) Dans \cite{HS}, M. Hindry et C. Salgado \'etendent un certain nombre des  r\'esultats de \cite{S} au cas des
 familles de vari\'et\'es ab\'eliennes sur la droite projective. Leur th\'eor\`eme  1.4 est maintenant un cas particulier
 du th\'eor\`eme \ref{Hauptsatz} ci-dessus. Ils \'etablissent aussi la g\'en\'eralisation   de 
  \cite[Thm. 1.1]{S} sugg\'er\'ee dans (ii) ci-dessus. Dans la version initiale du pr\'esent article, je m'\'etais limit\'e \`a supposer
  $W \to X$ dominant. La g\'en\'eralisation au cas o\`u  $\pi : g(W)_{ad} \to U$
 est dominant  de dimension relative au moins \'egale \`a 1 a \'et\'e sugg\'er\'ee par une remarque dans \cite{HS}. 
  \end{remas}

\section{Applications}

A la suite d'un article de B. Mazur \cite{Maz},
 la variation du rang dans les tordues quadratiques d'une courbe elliptique donn\'ee
 a fait l'objet de plusieurs travaux.

Le r\'esultat suivant est sans doute bien connu.

\begin{prop}\label{degre1}
 Soient  $k$ un corps de nombres et $E$ une courbe elliptique d'\'equation $y^2=p(x)$,
avec $p(x)$ polyn\^{o}me s\'eparable de degr\'e 3.
L'ensemble des 
  $t_{0} \in k$ tels que le rang de Mordell-Weil de la courbe 
d'\'equation $t_{0}.y^2=p(x)$
soit au moins \'egal \`a 1  est dense dans  le produit topologique $\prod_{v \in \Omega} k_{v}$.
\end{prop}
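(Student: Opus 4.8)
Le plan est d'appliquer directement le th\'eor\`eme \ref{Hauptsatz} \`a la famille des tordues quadratiques de $E$, param\'etr\'ee par $t$. On consid\`ere la surface affine $X_{0}$ d'\'equation $t y^2 = p(x)$ dans les coordonn\'ees $(t,x,y)$. Le point de d\'epart, imm\'ediat, est que $X_{0}$ est $k$-rationnelle : la projection $(t,x,y)\mapsto (x,y)$ admet pour inverse rationnel $(x,y)\mapsto (p(x)/y^2,x,y)$, donc $X_{0}$ est $k$-birationnelle \`a $\A^2_{k}$. Pour $t\neq 0$, en posant $X=tx$ et $Y=t^2y$ (apr\`es avoir au besoin mis $p$ sous la forme $x^3+ax+b$, ce qui est loisible), la fibre s'\'ecrit $Y^2=X^3+at^2X+bt^3$, tordue quadratique $E^{(t)}$ de $E$ par $t$, de discriminant $t^6\Delta_{E}\neq 0$. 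On prend donc $U=\G_{m,k}$ et, pour $\pi:X\to U$, le sch\'ema ab\'elien de fibre g\'en\'erique $E^{(t)}$, muni de la section nulle $\sigma$ \`a l'infini; l'espace total $X$, $k$-birationnel \`a $X_{0}$, est $k$-rationnel et poss\`ede le $k$-point $\sigma(1)$.

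L'\'etape centrale, et le seul point r\'eellement substantiel, est de v\'erifier que le rang g\'en\'erique vaut $r=0$; c'est ce qui rend la borne $r+1$ du th\'eor\`eme \ref{Hauptsatz} \'egale \`a $1$, comme demand\'e. La tordue $E^{(t)}$ devient isomorphe \`a la courbe \emph{constante} $E$ apr\`es l'extension quadratique $k(t)\subset k(s)$ avec $s^2=t$. Or un $k(s)$-point de $E$ correspond \`a un $k$-morphisme $\Spec k(s)\to E$, donc, $E$ \'etant propre et $\mathbb{P}^1_{s}$ lisse, \`a un $k$-morphisme $\mathbb{P}^1_{s}\to E$; faute de morphisme non constant d'une courbe de genre $0$ vers une courbe de genre $1$, un tel morphisme est constant, d'o\`u $E(k(s))=E(k)$. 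Par l'identification usuelle des points rationnels d'une tordue quadratique, $E^{(t)}(k(t))$ s'identifie \`a $\{Q\in E(k(s)):\sigma(Q)=-Q\}$, o\`u $\sigma$ engendre $\operatorname{Gal}(k(s)/k(t))$; comme $E(k(s))=E(k)$ est fix\'e par $\sigma$, ce groupe est $E(k)[2]$, fini. Donc $r=0$.

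Il ne reste qu'\`a conclure. La $k$-vari\'et\'e $X$ \'etant $k$-rationnelle, elle satisfait l'hypoth\`ese (6) (approximation faible), comme not\'e juste avant le th\'eor\`eme \ref{Hauptsatz}. Le point (iii) de ce th\'eor\`eme fournit alors l'\'enonc\'e (a) : l'ensemble $\mathcal{R}$ des $t_{0}\in U(k)$ dont la fibre a rang de Mordell-Weil au moins $r+1=1$ est dense dans $\prod_{v\in\Omega} U(k_{v})$, c'est-\`a-dire dense dans $\prod_{v\in S} k_{v}^{\times}$ pour tout ensemble fini $S$ de places. Comme $k_{v}^{\times}$ est dense dans $k_{v}$ pour chaque $v$, on en d\'eduit la densit\'e de $\mathcal{R}$ dans $\prod_{v\in S} k_{v}$ pour tout $S$ fini, soit la densit\'e voulue dans $\prod_{v\in\Omega} k_{v}$.

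Le principal obstacle n'est donc pas l'application du th\'eor\`eme de sp\'ecialisation de N\'eron, mais bien l'annulation du rang g\'en\'erique : sans le fait que $r=0$, la m\'ethode ne produirait que des fibres de rang $\geq r+1$ sans contr\^oler $r$. Heureusement, le fait que $E$ soit une courbe constante rend ce calcul \'el\'ementaire.
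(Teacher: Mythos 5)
Votre d\'emonstration est correcte et suit essentiellement celle du texte : la surface d'\'equation $ty^2=p(x)$ est $k$-birationnelle au plan affine, donc satisfait l'approximation faible, et le th\'eor\`eme \ref{Hauptsatz} (iii) conclut. Signalons seulement que votre \'etape dite centrale, le calcul du rang g\'en\'erique $r=0$, est superflue pour l'\'enonc\'e tel qu'il est formul\'e, puisque le th\'eor\`eme fournit des fibres de rang au moins $r+1\geq 1$ quel que soit $r\geq 0$; votre calcul (correct) pr\'ecise simplement que la borne obtenue est exactement $1$.
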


\begin{proof} La surface d'\'equation $ty^2=p(x)$ est clairement $k$-birationnelle
\`a un plan affine et satisfait en particulier l'approximation faible.  Le
  th\'eor\`eme \ref{Hauptsatz} (iii) donne le r\'esultat.
\end{proof}

\begin{prop}\label{rohr}\label{degre2}
 Soit $E$ une courbe elliptique d'\'equation affine $y^2=p(x)$
 sur un corps de nombres $k$, avec $p(x)$ un polyn\^{o}me s\'eparable de degr\'e 3. Soit $d(t)$ un polyn\^{o}me quadratique
 s\'eparable. 
L'adh\'erence des $t_{0} \in k$  avec $ d(t_{0})\neq 0$ tels que 
 le rang de Mordell-Weil de la courbe  
 d'\'equation $d(t_{0}).y^2=p(x)$ soit au moins \'egal \`a 1
contient un ouvert de $\prod_{v\in \Omega} k_{v}$.
L'ensemble de ces $t_{0}$ est dense dans le produit
 $\prod_{v\in  S_{\infty}} k_{v}$. 
\end{prop}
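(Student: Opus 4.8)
The plan is to realise the quadratic-twist family as an elliptic scheme over an open subset of the affine line and to apply Theorem~\ref{pourrohrlich}. I consider the surface $X$ with affine equation $d(t)y^2=p(x)$, fibred over the $t$-line. On the open set $U\subset \A^1_{k}$ where $d(t)\neq 0$ the fibres $d(t_0)y^2=p(x)$ are smooth genus-one curves ($p$ being separable), and the point at infinity of the cubic furnishes a section; thus $\pi:X\to U$ is an elliptic scheme with $X(k)\neq\emptyset$, and the fibre over $t_0\in U(k)$ is exactly the twist $d(t_0)y^2=p(x)$. I take $U_{c}=\bP^1_{k}$ and let $\pi_{c}:X_{c}\to U_{c}$ be the relatively minimal smooth projective model, with zero section $\sigma_{c}$. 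If $r$ denotes the generic Mordell--Weil rank, the set ${\mathcal R}$ of Theorem~\ref{pourrohrlich} is the set of $t_0$ with twist-rank $\geq r+1$; so the whole argument hinges on showing $r=0$, after which ${\mathcal R}$ becomes precisely the set in the statement.

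To see that $r=0$ I would trivialise the twist by a base change. Over $\overline{k}$, after an affine change of variables writing $p(x)=x^3+ax+b$, the generic fibre $X_\eta$ has the \emph{isotrivial} model $Y^2=X^3+a\,d(t)^2X+b\,d(t)^3$, with the same $j$-invariant as $E$. Over the quadratic extension $\overline{k}(t)(\sqrt{d(t)})$ the twist becomes the constant curve $E$; and since $d$ has two distinct roots, the smooth model of $u^2=d(t)$ has genus zero, so $\overline{k}(t)(\sqrt{d})=\overline{k}(s)$ is again a rational function field. Hence $X_\eta(\overline{k}(t))$ is the subgroup of $E(\overline{k}(s))=E(\overline{k})$ fixed by the Galois involution, which acts through $[-1]$; that subgroup is $E[2](\overline{k})$, a finite group. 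Therefore the geometric, hence the arithmetic, rank of $X_\eta$ is $0$, i.e. $r=0$. (Equivalently, $X_{c}$ is a rational elliptic surface with exactly two fibres of type $I_0^*$, over the roots of $d$, and good reduction elsewhere, so Shioda--Tate gives rank $10-2-(4+4)=0$.)

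The second input is that $X_{c}$ satisfies (BMF). Here I invoke that $X$ is $k$-birational to a Ch\^atelet surface \cite{CTSaSD}, and that such surfaces satisfy (BMF) by \cite[Thm.~8.7]{CTSaSD}. Since $\Br(X_{c})/\Br(k)$ is finite for a geometrically rational surface, and the Brauer--Manin description of the closure of the rational points is a birational invariant of smooth projective varieties, (BMF) transfers from the Ch\^atelet model to the elliptic-surface model $X_{c}$.

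With $r=0$ and (BMF) in hand, Theorem~\ref{pourrohrlich} applies and gives that ${\mathcal R}$ has open closure in $\prod_{v\in\Omega}\bP^1(k_v)$ and that its closure in $\prod_{v\in S_{\infty}}\bP^1(k_v)$ is a finite union of connected components; moreover ${\mathcal R}\neq\emptyset$, since the complement of the relevant set is thin while $X(k)$ is not. Because $U\subset\A^1$, one has ${\mathcal R}\subset\prod_{v}k_v\subset\prod_{v}\bP^1(k_v)$, and the closure of ${\mathcal R}$ in $\prod_{v\in\Omega}k_v$ is the trace of its open closure in $\prod_{v\in\Omega}\bP^1(k_v)$, hence open and nonempty; this is the first assertion. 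For the archimedean statement, each $\bP^1(k_v)$ with $v$ archimedean is connected ($S^1$ or $S^2$), so the finite product $\prod_{v\in S_{\infty}}\bP^1(k_v)$ is connected; a nonempty finite union of its connected components is the whole space, whence ${\mathcal R}$ is dense in $\prod_{v\in S_{\infty}}\bP^1(k_v)$, and therefore in $\prod_{v\in S_{\infty}}k_v$. The main obstacle is the vanishing of the generic rank $r$; everything else is a matter of assembling known inputs and the topology of $\bP^1(k_v)$.
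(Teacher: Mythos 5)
Votre preuve est correcte et suit essentiellement la m\^eme voie que celle de l'article : on identifie la surface $d(t)y^2=p(x)$, birationnellement, \`a une surface de Ch\^atelet $u^2-av^2=cp(x)$, on invoque \cite[Thm.~8.7]{CTSaSD} pour (BMF) (l'article laisse implicite le transfert birationnel au mod\`ele elliptique $X_{c}$, que vous explicitez), on applique le th\'eor\`eme \ref{pourrohrlich}, et on conclut par la connexit\'e de $\prod_{v\in S_{\infty}}\P^1(k_{v})$. Votre seul \'ecart est le calcul du rang g\'en\'erique $r=0$ (via la trivialisation de la tordue sur $\overline{k}(t)(\sqrt{d})$, ou Shioda--Tate) : il est correct mais superflu, car pour tout $r\geq 0$ l'ensemble de la proposition (rang $\geq 1$) contient l'ensemble ${\mathcal R}$ (rang $\geq r+1$) du th\'eor\`eme \ref{pourrohrlich}, et h\'erite donc directement de ses propri\'et\'es d'adh\'erence, ce qui est tout ce dont l'article a besoin.
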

\begin{proof} On peut supposer le polyn\^{o}me $d(t)$ donn\'e par 
$$d(t)= c(t^2-a)$$ avec $c.a \neq 0$. 
 La surface $X$ d'\'equation
 $ c(t^2-a)y^2=p(x)$ qui via la projection sur $t$ d\'efinit une famille
 de courbes elliptiques,
  est birationnelle \`a une surface d'\'equation affine
 $ u^2-av^2= cp(x)$. Comme $p(x)$ est un polyn\^{o}me de degr\'e 3,
 on reconna\^{\i}t l\`a l'\'equation affine d'une surface de Ch\^{a}telet, 
 dont de plus tout
 mod\`ele projectif et lisse poss\`ede un point $k$-rationnel.
 D'apr\`es \cite[Thm. 8.7]{CTSaSD}, une telle surface satisfait (BMF). 
L'\'enonc\'e suit alors du th\'eor\`eme \ref{pourrohrlich} et de la connexit\'e
de $\prod_{v\in  S_{\infty}}  \P^1(k_{v})$.
\end{proof}

\begin{rema}\label{rohrbis}
Cet \'enonc\'e g\'en\'eralise le th\'eor\`eme 
de Rohrlich  \cite[Theorem 3]{R} assurant que pour $k={\Q}$,
sous les hypoth\`eses de la proposition \ref{rohr}, s'il existe au moins un  $t_{0} \in {\Q}$ \`a fibre lisse de rang au moins 1, 
alors  l'ensemble de ces points
est dense dans ${\R}$. Pour \'etablir ce r\'esultat,   Rohrlich utilise deux fibrations elliptiques
diff\'erentes sur la surface $X$. 

Le lien entre les tordues quadratiques par un polyn\^{o}me de degr\'e 2 et les surfaces
de Ch\^{a}telet a \'et\'e observ\'e r\'ecemment par D. Loughran et C. Salgado \cite{LS}.
\end{rema}

\section{Appendice: Vari\'et\'es ab\'eliennes de grand rang}\label{grandrang}

Dans toute cette section, $k$ d\'esigne un corps de nombres.

Le th\'eor\`eme suivant est une extension formelle
d'un th\'eor\`eme de N\'eron \cite[Chap. IV]{N}.
 
\begin{theo}\label{neronetkummer}
Soit $A$ une vari\'et\'e ab\'elienne sur $k$.
Soit $r\geq 1$ un entier. Soit $X$ une $k$-vari\'et\'e projective,
lisse, g\'eom\'etriquement int\`egre. Supposons qu'il existe des ouverts non vides
$U \subset A^r$, $V \subset X$ et un morphisme fini \'etale $q= U \to V$ de degr\'e $d \geq 1$.
Soit ${\mathcal R} \subset V(k)$  l'ensemble des $k$-points $m$ dont la fibre $n:= q^{-1}(m)$
est un point ferm\'e $n$ de degr\'e $d$ sur $k$ tel que le groupe de Mordell-Weil
 $A(k(n))$ soit de rang au moins $r$.
 
Si la $k$-vari\'et\'e $X$ satisfait l'approximation faible faible, 
alors il existe un ensemble fini $S_{0}$ de places de $k$
tel que ${\mathcal R}$ est dense dans $\prod_{v \notin S_{0}} X(k_{v})$
et en particulier est dense  dans $X$ pour la topologie de Zariski.
\end{theo}
\begin{proof} Soit $L=k(A^r)$ et $K=k(X)$.
On consid\`ere la fibration
$A^{r+1} \to A^{r}$ donn\'ee par la projection sur les $r$ derniers facteurs. Le rang g\'en\'erique,
c'est-\`a-dire le rang du groupe $A(L)$ est au moins \'egal \`a $r$.
On consid\`ere la $K$-vari\'et\'e ab\'elienne $B:=R_{L/K}(A)$. Par une variante du lemme \ref{generique},
 le rang de
$B(K)=A(L)$ est au moins \'egal \`a $r$. On consid\`ere le $V$-sch\'ema ab\'elien
$R_{U/V}(A_{U})$.  Le th\'eor\`eme \ref{neronserre} 
donne que l'ensemble des $k$-points de $V$ dont la fibre est de rang au moins
$r$ est le compl\'ementaire d'un ensemble mince de  $V(k)$.
L'hypoth\`ese que $X$ satisfait l'approximation faible faible et
le th\'eor\`eme \ref{mincefin} donnent alors le r\'esultat.
\end{proof}

\begin{cor}\label{nerondegred} (N\'eron)
Soit $A$ une vari\'et\'e ab\'elienne sur $k$. 
Soit $r\geq 1$ un entier. Soit $d \geq  1$ le degr\'e d'une application rationnelle
g\'en\'eriquement finie de $A$ vers un espace projectif $\P^n_{k}$.
Il  existe une extension de corps $L/k$ compos\'ee
 de $r$ extensions de degr\'e $d$ telle que le rang de Mordell-Weil
de $A(L)$ soit au moins $r$.
\end{cor}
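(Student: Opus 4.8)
Le plan est d'appliquer le théorème \ref{neronetkummer} à la puissance $r$-ième de l'application $\phi : A \dashrightarrow \P^n_{k}$ donnée, génériquement finie de degré $d$, et donc dominante. Comme on travaille en caractéristique zéro, je commencerais par fixer un ouvert non vide $U_{0} \subset A$ et un ouvert non vide $V_{0} \subset \P^n_{k}$ tels que $\phi$ induise un morphisme fini étale $U_{0} \to V_{0}$ de degré $d$.

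Je poserais ensuite $X := (\P^n_{k})^r$, qui est une $k$-variété projective, lisse, géométriquement intègre et $k$-rationnelle, donc satisfait l'approximation faible et a fortiori l'approximation faible faible. Avec $U := U_{0}^{\,r} \subset A^r$, $V := V_{0}^{\,r} \subset X$ et $q := \phi \times \cdots \times \phi : U \to V$ ($r$ facteurs), on obtient un morphisme fini étale de degré $d^r$. Le théorème \ref{neronetkummer}, appliqué à ce $X$ et avec le degré $d^r$ à la place de $d$, fournit alors un ensemble ${\mathcal R} \subset V(k)$ dense, en particulier non vide, formé de points $m$ dont la fibre $q^{-1}(m)$ est un point fermé de degré $d^r$ et tels que $A(k(q^{-1}(m)))$ soit de rang au moins $r$.

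Il resterait à analyser la structure du corps résiduel, et c'est là le point délicat. Pour $m = (m_{1},\dots,m_{r}) \in {\mathcal R}$, la fibre se décompose en produit
\[
q^{-1}(m) = \phi^{-1}(m_{1}) \times_{k} \cdots \times_{k} \phi^{-1}(m_{r}) = \Spec\bigl(L_{1} \otimes_{k} \cdots \otimes_{k} L_{r}\bigr),
\]
où $L_{i}$ désigne la $k$-algèbre étale de dimension $d$ des fonctions sur $\phi^{-1}(m_{i})$. Puisque $q^{-1}(m)$ est un point fermé, le produit tensoriel $L_{1} \otimes_{k} \cdots \otimes_{k} L_{r}$ est un corps, ce qui force chaque facteur $L_{i}$ à être lui-même un corps — donc une extension de $k$ de degré $d$ — et les $L_{i}$ à être linéairement disjointes sur $k$. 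Le corps résiduel $L := k(q^{-1}(m)) = L_{1}\cdots L_{r}$ est donc le composé de $r$ extensions de degré $d$, et $A(L)$ est de rang au moins $r$, ce qui conclut. L'essentiel du travail est cette vérification sur la fibre ; tout le reste est une application directe du théorème \ref{neronetkummer} au morphisme produit.
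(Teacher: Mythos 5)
Votre démonstration est correcte et suit essentiellement la même voie que celle de l'article : celui-ci applique lui aussi le théorème \ref{neronetkummer} au morphisme produit, fini étale de degré $d^r$, entre des ouverts de $A^r$ et de $(\P^n_{k})^r$ (variété qui satisfait l'approximation faible), en observant que la fibre au-dessus d'un $k$-point est le spectre d'un produit tensoriel de $r$ extensions séparables de degré $d$. Votre analyse finale — le fait que ce produit tensoriel, étant un corps puisque la fibre est un point fermé, force chaque facteur $L_{i}$ à être un corps, de sorte que le corps résiduel est bien un composé de $r$ extensions de degré $d$ — ne fait qu'expliciter proprement un point que l'article laisse implicite.
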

\begin{proof}
Il existe un ouvert    $V$ de $A$, un ouvert non vide $U \subset \P^n_{k}$,
et un morphisme fini \'etale $V \to U$ de degr\'e $d$.
On consid\`ere le morphisme fini \'etale $V^r \to U^r$.
L'image r\'eciproque d'un $k$-point de  $U^r$
est le spectre d'une $k$-alg\`ebre produit tensoriel de $r$ extensions s\'eparables
de $k$ chacune de degr\'e $d$. 
L'\'enonc\'e r\'esulte donc du th\'eor\`eme \ref{neronetkummer}.
 \end{proof}

Comme le note N\'eron, le cas particulier  suivant avait d\'ej\`a \'et\'e 
\'etabli par Billing \cite{Bg}.

\begin{cor}\label{billing}
Soit $E$ une courbe elliptique sur $k$.  

(i) Soit $r\geq 1$ un entier. 
Il existe une extension multiquadratique $L/k$ de degr\'e   $2^r$
 telle que le rang de Mordell-Weil de $E(L)$ soit au moins $r$.
 
 (ii) Soit $k_{2}$ le compos\'e de toutes
les extensions quadratiques de $k$.  Le rang de $E(k_{2})$ est infini.
\end{cor}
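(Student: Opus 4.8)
Le plan est de d\'eduire les deux assertions du corollaire \ref{nerondegred}, appliqu\'e \`a la vari\'et\'e ab\'elienne $A = E$.

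Pour \'etablir (i), je commencerais par remarquer que le quotient de $E$ par l'involution $[-1]$, c'est-\`a-dire la projection sur la coordonn\'ee $x$, fournit un morphisme fini $E \to \P^1_{k}$ de degr\'e $d = 2$ ; c'est en particulier une application rationnelle g\'en\'eriquement finie vers un espace projectif. J'appliquerais alors le corollaire \ref{nerondegred} avec $A = E$ et $d = 2$ : pour tout entier $r \geq 1$, on obtient une extension $L/k$, compos\'ee de $r$ extensions de degr\'e $2$, telle que le rang de Mordell-Weil de $E(L)$ soit au moins $r$. Suivant la construction du corollaire \ref{nerondegred}, ce corps $L = k(n)$ est le corps r\'esiduel d'un point ferm\'e $n$ de degr\'e $d^r = 2^r$ ; c'est donc le compos\'e $k(\sqrt{a_1}, \dots, \sqrt{a_r})$ de $r$ extensions quadratiques lin\'eairement disjointes. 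Une telle extension \'etant galoisienne de groupe $(\Z/2)^r$, elle est multiquadratique de degr\'e $2^r$, ce qui donne (i).

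Pour (ii), j'invoquerais simplement le fait que toute extension multiquadratique de $k$ est contenue dans le compos\'e $k_2$ de toutes les extensions quadratiques de $k$. Les extensions $L_r/k$ fournies par (i) v\'erifient donc $L_r \subset k_2$, d'o\`u des inclusions $E(L_r) \subset E(k_2)$ et, apr\`es tensorisation par $\Q$, les minorations $\mathrm{rang}\, E(k_2) \geq \mathrm{rang}\, E(L_r) \geq r$ valables pour tout $r$ ; le rang de $E(k_2)$ est donc infini.

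Le seul point d\'elicat est de s'assurer que le degr\'e de $L$ vaut exactement $2^r$, et non un diviseur propre. Cela tient \`a ce que, dans la d\'emonstration du corollaire \ref{nerondegred}, le point $m$ est choisi dans l'ensemble ${\mathcal R}$, dont la d\'efinition impose que la fibre $q^{-1}(m)$ soit un point ferm\'e de degr\'e $d^r$, ce qui force la disjonction lin\'eaire des $r$ extensions quadratiques et fait ainsi de $L$ une extension multiquadratique de degr\'e exactement $2^r$. En dehors de cette v\'erification, il n'y a pas d'obstacle s\'erieux, les deux \'enonc\'es \'etant des sp\'ecialisations directes de l'\'enonc\'e de N\'eron.
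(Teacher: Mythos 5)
Your proof is correct and takes essentially the same route the paper intends: Corollaire \ref{billing} is stated there without a separate proof, as the special case $d=2$ of Corollaire \ref{nerondegred} furnished by the degree-2 map $E \to \P^1_{k}$ (quotient par $[-1]$), exactly as you do. Your verification that $[L:k]=2^r$ exactly --- because the set $\mathcal{R}$ used in the proof of Corollaire \ref{nerondegred}, via le th\'eor\`eme \ref{neronetkummer}, requires the fibre to be a single closed point of degree $d^r$, forcing the $r$ quadratic extensions to be linearly disjoint --- correctly supplies the one detail the paper leaves implicit.
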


La m\^{e}me m\'ethode redonne le th\'eor\`eme de Holmes et Pannekoek \cite{HP}.
En se ramenant au  th\'eor\`eme de
Skorobogatov et Zarhin \cite{SZ} sur les surfaces $K3$,
Holmes et Pannekoek \cite[Prop. 13]{HP}  montrent que pour
toute vari\'et\'e de Kummer $X$ sur un corps de nombres $k$,
le quotient ${\rm Br}(X)/{\rm Br}(k)$ est fini.
Ceci implique    \cite[Prop. 14]{HP} que si l'obstruction de Brauer-Manin
est la seule pour les points rationnels sur $X$ alors 
l'approximation faible faible vaut pour $X$.

\begin{theo} \cite[Thm. 17]{HP}.
Soient $k$ un corps de nombres et $A$ une vari\'et\'e ab\'elienne.
Soit $r>0$ un entier. Soit $X_{r}$ un mod\`ele projectif et lisse
de la vari\'et\'e de Kummer quotient de $A^r$ par l'involution
antipodale. Si $X_{r}$ satisfait l'approximation faible faible,
alors il existe une tordue quadratique de $A$
sur $k$ qui est de rang au moins $r$.
En particulier, si la propri\'et\'e d'approximation faible faible vaut pour
toutes les vari\'et\'es de Kummer $X_{r}$, le rang des tordues quadratiques de $A$ n'est pas born\'e.
\end{theo}

\begin{proof}
On consid\`ere la famille de vari\'et\'es ab\'eliennes
$\pi: A^{r+1} \to  A^{r}$ donn\'ee par la projection sur les $n$ derniers facteurs.
Soit $L=k(A^r)$ et $K=k(X)$.
Soit $q: U \to V$ un morphisme fini \'etale de degr\'e 2,
avec $U \subset  A^r$ et $V \subset X$ des ouverts de Zariski convenables.
Soit $\rho$ le rang de $A(k)$.
 Le rang du groupe de Mordell-Weil $A(L)$ de
 la fibre g\'en\'erique $A_{L}$ est au moins \'egal \`a $r+\rho$. 
 Notons $B$ le $V$-sch\'ema ab\'elien $R_{U/V}(A_{U})$.

 On a $B(K)=A(L)$. Le rang de $B(K)$ est donc au moins $r+\rho$.
Le  th\'eor\`eme \ref{neronetkummer}  
montre que les $k$-points $m$  de 
$V$, de fibre $n=q^{-1}(m)$ un point ferm\'e de degr\'e 2,
 tels que le rang de la vari\'et\'e $B_{m}$ sur $k$
 et donc de $A(k(n))$  soit au moins \'egal \`a $r+\rho$ sont denses dans $V$
 pour la topologie de Zariski.
 Le rang de $A(k(n))$ est la somme de $\rho$  
 et du rang sur $k$ d'une tordue quadratique de $A$,
 dont le rang sur $k$ est donc au moins $r$.
\end{proof}

 \begin{rema}
 La preuve donn\'ee ici est l\'eg\`erement diff\'erente de celle de \cite{HP}.
 En particulier, il n'y a pas de discussion du comportement de la torsion
 des vari\'et\'es ab\'eliennes dans la famille, ni d'apparente utilisation de hauteurs.
  \end{rema}

\bigskip
 
{\bf Remerciements.} {\it Cette note a \'et\'e  a \'et\'e suscit\'ee par un expos\'e de C.~Salgado.
Je remercie D. Loughran pour diverses discussions.  L'appendice a \'et\'e suscit\'e par
 la remarque de B. Poonen que la 
m\'ethode du point g\'en\'erique m\`ene au corollaire \ref{nerondegred}.
Je remercie le rapporteur ou la rapporteuse de cet article pour sa lecture attentive.
Le travail a \'et\'e r\'ealis\'e lors du trimestre 
``A la red\'ecouverte des points rationnels'' organis\'e par D. Harari, E. Peyre et A. Skorobogatov \`a  l'Institut Henri Poincar\'e \`a Paris, d'avril \`a juillet 2019.

\end{document}